\begin{document}
\title{Logarithmic divergences: geometry and interpretation of curvature}
%
%

\author{Ting-Kam Leonard Wong\inst{1} \and 
Jiaowen Yang\inst{2}}
\authorrunning{T.-K.~L.~Wong and J.~Yang}
%
\institute{Department of Statistical Sciences, University of Toronto\\
\email{tkl.wong@utoronto.ca} \and
Department of Mathematics, University of Southern California\\
\email{jiaoweny@usc.edu}}
\maketitle              
\begin{abstract}
We study the logarithmic $L^{(\alpha)}$-divergence which extrapolates the Bregman divergence and corresponds to solutions to novel optimal transport problems. We show that this logarithmic divergence is equivalent to a conformal transformation of the Bregman divergence, and, via an explicit affine immersion, is equivalent to Kurose's geometric divergence. In particular, the $L^{(\alpha)}$-divergence is a canonical divergence of a statistical manifold with constant sectional curvature $-\alpha$. For such a manifold, we give a geometric interpretation of its sectional curvature in terms of how the divergence between a pair of primal and dual geodesics differ from the dually flat case. Further results can be found in our follow-up paper \cite{WY19} which uncovers a novel relation between optimal transport and information geometry.

\keywords{Logarithmic divergence \and Bregman divergence \and Conformal divergence \and Affine immersion \and Constant sectional curvature \and Optimal transport}
\end{abstract}

\section{Introduction}
Let $\Omega \subset \mathbf{R}^n$ be an open convex set, $n \geq 2$. For $\alpha > 0$ fixed, we say that a function $\varphi: \Omega \rightarrow \mathbf{R}$ is $\alpha$-exponentially concave if $e^{\alpha \varphi}$ is concave on $\Omega$. All functions in this paper are assumed to be smooth. Given such a function $\varphi$, we define its $L^{(\alpha)}$-divergence by
\begin{equation} \label{eqn:L.alpha.divergence}
\mathbf{L}^{(\alpha)}_{\varphi}[\xi : \xi'] := \frac{1}{\alpha} \log (1 + \alpha D \varphi(\xi') \cdot (\xi - \xi')) - (\varphi(\xi) - \varphi(\xi')), \quad \xi, \xi' \in \Omega,
\end{equation}
where $D\varphi$ is the Euclidean gradient and $\cdot$ is the dot product. We always assume the Hessian $D^2 e^{\alpha \varphi}$ is strictly negative definite on $\Omega$. Then $\mathbf{L}^{(\alpha)}_{\varphi}$ is a divergence on $\Omega$, regarded as a manifold, in the sense of \cite[Definition 1.1]{A16}. As $\alpha \downarrow 0$, the $L^{(\alpha)}$-divergence (with $\varphi$ fixed) converges to the Bregman divergence defined by
\begin{equation} \label{eqn:Bregman.divergence}
\mathbf{B}_{\phi}[\xi : \xi'] := (\phi(\xi) - \phi(\xi')) - D\phi(\xi') \cdot (\xi - \xi'),
\end{equation}
where $\phi = -\varphi$ is convex with $D^2 \phi > 0$. Thus the family $\{ {\bf L}_{\varphi}^{(\alpha')} \}_{0 < \alpha' \leq \alpha}$ of logarithmic divergences extrapolate the Bregman divergence $\mathbf{B}_{\phi}$. 

Originally motivated by applications in stochastic portfolio theory \cite{F02}, the $L^{(1)}$-divergence (and its extension to the $L^{(\alpha)}$-divergence) was introduced by Pal and the first author in \cite{PW16} \cite{W19} and was studied further in \cite{W15} \cite{PW18} \cite{W18}. There are two main results proved in these papers. First, the $L^{(\alpha)}$-divergence corresponds to the solution to an optimal transport problem with a logarithmic cost function; this is formulated using the general framework of $c$-divergence, see \cite{PW18} \cite{W18} \cite{WY19}. Also see  \cite{PW18b} \cite{KZ19} \cite{P19} for recent results about the optimal transport problem which have independent mathematical interest. Second, the induced statistical manifold $(\mathcal{M}, g, \nabla, \nabla^*)$ (see \cite[Section 6.2]{A16} for the definition) is dually projectively flat with constant sectional curvature $-\alpha$. In \cite{W19} we also defined an $L^{(-\alpha)}$-divergence corresponding to constant positive sectional curvature $\alpha$. For expositional simplicity we only consider the $L^{(\alpha)}$-divergence in this paper and \cite{WY19}, but similar results hold for the $L^{(-\alpha)}$-case as well.

In this paper we develop two geometric aspects of the logarithmic divergence. First, we connect the $L^{(\alpha)}$-divergence with classical topics in information geometry, namely conformal transformation and affine differential geometry. In particular, by using an explicit affine immersion, we show that the $L^{(\alpha)}$-divergence is equivalent to the canonical geometric divergence constructed by Kurose \cite{K94}. Second, we provide a geometric interpretation of the sectional curvature for a statistical manifold with constant negative sectional curvature. By analyzing a canonical divergence between a pair of primal and dual geodesics, we show that the sectional curvature can be quantified in terms of the deviation from the generalized Pythagorean relation of a dually flat manifold (see Theorem \ref{thm:curvature} below). This extends the geometric interpretation of sectional curvature in Riemannian geometry. In our follow-up work \cite{WY19} we proved a more general result (see \cite[Theorem 3.13]{WY19}) that holds for any divergence (though it is not intrinsic in the information geometric sense). This was achieved by a novel relation between information geometry and the pseudo-Riemannian framework of Kim and McCann \cite{KM10} concerning the Ma-Trudinger-Wang tensor in optimal transport.

\section{Conformal divergence and its geometry}
We refer the reader to \cite{A16} for general background in information geometry. Conformal transformations of divergence have been studied in the literature; see for example \cite{OAT91} \cite{M99} \cite{AC10} \cite{NNA16} and the references therein. An important application is robust clustering \cite{VB10} \cite{NN15}. 

\begin{definition} \label{def:conformal.divergence}
Let $\phi : \Omega \rightarrow \mathbb{R}$ be convex (with $D^2 \phi > 0$) and let $\kappa: \Omega \rightarrow (0, \infty)$. We define the (left-sided) conformal transformation of the Bregman divergence $\mathbf{B}_{\phi}$ by
\begin{equation} \label{eqn:conformal.divergence}
\mathbf{D}_{\phi,\kappa}[\xi : \xi'] := \kappa(\xi) \mathbf{B}_{\phi}[\xi : \xi'].
\end{equation}
To abbreviate we call $\mathbf{D}_{\phi, \kappa}$ a conformal divergence.
\end{definition}

Note that a right-sided conformal transformation can be converted to a left-sided one by considering the convex conjugate of $\phi$ (see \cite[p.17]{A16}).

Our first result is that the $L^{(\alpha)}$-divergence is, up to a monotone transformation, equal to a conformal transformation of a Bregman divergence. This shows that the geometry induced by the $L^{(\alpha)}$-divergence can be studied using results of Bregman divergence and conformal transformation. 

\begin{theorem} \label{thm:l.c.equi}
Consider an $L^{(\alpha)}$-divergence $\mathbf{L}_{\varphi}^{(\alpha)}$ on $\Omega$ as in \eqref{eqn:L.alpha.divergence}. Let $\phi = - e^{\alpha \varphi}$ which is convex and let $\kappa = -\frac{1}{\alpha \phi} > 0$. Then, with $T(x) = \frac{1}{\alpha} (e^{\alpha x} - 1)$, we have
\begin{equation} \label{eqn:L.alpha.as.conformal}
T (\mathbf{L}_{\varphi}^{(\alpha)}) \equiv \mathbf{D}_{\phi, \kappa}.
\end{equation}
In particular, the conformal divergence $\mathbf{D}_{\phi, h}$ induces the same dualistic structure $(g, \nabla, \nabla^*)$ as that of $\mathbf{D}_{\varphi}^{(\alpha)}$.
\end{theorem}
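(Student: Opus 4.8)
The plan is to prove the identity \eqref{eqn:L.alpha.as.conformal} by a direct substitution, and then to deduce the geometric assertion from a general invariance property of the divergence-to-structure construction. First I would express everything in terms of $\phi = -e^{\alpha \varphi}$. Since $e^{\alpha \varphi} = -\phi$, we have $\varphi = \frac{1}{\alpha} \log(-\phi)$ and consequently $\alpha D\varphi = D\phi / \phi$. Note that $D^2 \phi = -D^2 e^{\alpha\varphi} > 0$ by the standing assumption, so $\phi$ is strictly convex, and $\phi < 0$ gives $\kappa = -\frac{1}{\alpha \phi} > 0$, as claimed in the statement.

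Substituting these relations into \eqref{eqn:L.alpha.divergence} and exponentiating, I would compute
\begin{equation*}
e^{\alpha \mathbf{L}_{\varphi}^{(\alpha)}[\xi:\xi']} = \bigl( 1 + \alpha D\varphi(\xi') \cdot (\xi - \xi') \bigr)\, e^{-\alpha(\varphi(\xi) - \varphi(\xi'))} = \frac{\phi(\xi') + D\phi(\xi') \cdot (\xi - \xi')}{\phi(\xi)},
\end{equation*}
where the two occurrences of $\phi(\xi')$ cancel. Applying $T(x) = \frac{1}{\alpha}(e^{\alpha x} - 1)$ then yields $T(\mathbf{L}_{\varphi}^{(\alpha)}) = \frac{1}{\alpha \phi(\xi)} \bigl( \phi(\xi') + D\phi(\xi') \cdot (\xi - \xi') - \phi(\xi) \bigr)$, and the numerator is precisely $-\mathbf{B}_{\phi}[\xi:\xi']$ by \eqref{eqn:Bregman.divergence}. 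Since $\kappa = -\frac{1}{\alpha \phi}$, the right-hand side is exactly $\kappa(\xi)\, \mathbf{B}_{\phi}[\xi:\xi'] = \mathbf{D}_{\phi,\kappa}[\xi:\xi']$ as in \eqref{eqn:conformal.divergence}, establishing \eqref{eqn:L.alpha.as.conformal}. This part is routine algebra once the substitution is made.

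For the ``in particular'' claim I would invoke the standard fact (via Eguchi's formulas) that the induced dualistic structure $(g, \nabla, \nabla^*)$ depends only on the partial derivatives of the divergence up to third order along the diagonal. The key point is that a smooth reparametrization $T$ with $T(0) = 0$ and $T'(0) = 1$ leaves this $3$-jet unchanged: writing $T(u) = u + \frac{1}{2} T''(0) u^2 + O(u^3)$ and using that any divergence vanishes to second order transverse to the diagonal, the correction $T(\mathbf{L}_{\varphi}^{(\alpha)}) - \mathbf{L}_{\varphi}^{(\alpha)} = O\bigl( (\mathbf{L}_{\varphi}^{(\alpha)})^2 \bigr)$ vanishes to fourth order there. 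Hence all of its mixed partial derivatives up to order three vanish on the diagonal and contribute nothing to $g$, $\nabla$, or $\nabla^*$. Combined with \eqref{eqn:L.alpha.as.conformal}, this shows that $\mathbf{L}_{\varphi}^{(\alpha)}$ and $\mathbf{D}_{\phi,\kappa}$ induce the same structure.

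The only genuinely delicate point is verifying the normalization of $T$, since a monotone transformation with $T'(0) \neq 1$ would rescale the metric and could alter the connections. Here I would simply check directly that $T(0) = \frac{1}{\alpha}(e^{0} - 1) = 0$ and $T'(x) = e^{\alpha x}$, so that $T'(0) = 1$; this is what makes the invariance argument apply and is the crux of the whole deduction.
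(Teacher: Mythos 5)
Your proposal is correct and follows essentially the paper's own route: the identity \eqref{eqn:L.alpha.as.conformal} is verified by exactly the direct substitution the paper calls a ``straightforward computation'' (your algebra checks out, including $\alpha D\varphi = D\phi/\phi$ and the cancellation yielding $\kappa(\xi)\mathbf{B}_{\phi}$), and your jet argument for the ``in particular'' claim is precisely the content of the paper's Lemma~\ref{lemma:equi.geo} specialized to $T'(0)=1$. One minor quibble: by that lemma a transformation with $T'(0)\neq 1$ would only rescale $g_{ij}$, $\Gamma_{ijk}$, $\Gamma^*_{ijk}$ by the common factor $T'(0)$ and hence leave the connections $\Gamma_{ij}{}^{k}$, $\Gamma^{*}_{ij}{}^{k}$ unchanged, so your closing worry that $T'(0)\neq 1$ ``could alter the connections'' overstates the danger --- checking $T'(0)=e^{0}=1$ is needed only to match the metric $g$ itself.
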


\begin{proof}
The identity \eqref{eqn:L.alpha.as.conformal}, once conceived, can be verified by a straightforward computation. The second statement is a consequence of the following lemma which can be proved again by a computation. Note that similar reasonings are used in \cite[Lemma 3]{W18} and \cite[Theorem 17]{W18}. \qed
\end{proof}

\begin{lemma}\label{lemma:equi.geo}
Let $\tilde{\mathbf{D}}$ and $\mathbf{D}$ be divergences related by a monotone transformation: $\tilde{\mathbf{D}} = T(\mathbf{D})$, where $T: [0, \infty) \rightarrow [0, \infty)$ is strictly increasing with $T(0) = 0$. Let $(g, \nabla, \nabla^*)$ and $(\tilde{g}, \tilde{\nabla}, \tilde{\nabla}^*)$ be respectively the dualistic structures induced by $\mathbf{D}$ and $\tilde{\mathbf{D}}$. Then, in any local coordinate system, the coefficients of the dualistic structures are related by
\begin{equation} \label{eqn:monotone.transformation.divergence}
\tilde{g}_{ij} = T'(0) g_{ij}, \quad \tilde{\Gamma}_{ijk} = T'(0) \Gamma_{ijk}, \quad \tilde{\Gamma}_{ijk}^* = T'(0) \Gamma_{ijk}^*.
\end{equation}
In particular, we have $\tilde{\Gamma}_{ij}\mathstrut^{k} = \Gamma_{ij}\mathstrut^{k}$ and $\tilde{\Gamma}_{ij}^*\mathstrut^{k} = \Gamma_{ij}\mathstrut^{k}$, and the primal and dual curvature tensors are the same.
\end{lemma}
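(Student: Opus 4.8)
The plan is to work directly from Eguchi's relations, which express the metric and the two families of Christoffel symbols (of the first kind) as restrictions to the diagonal $\{\xi' = \xi\}$ of mixed partial derivatives of the divergence. Writing $\partial_i$ for differentiation in the first argument $\xi^i$ and $\partial'_j$ for differentiation in the second argument $\xi'^j$, these read
\begin{equation*}
g_{ij} = -\left.\partial_i \partial'_j \mathbf{D}\right|_{\xi'=\xi}, \qquad \Gamma_{ijk} = -\left.\partial_i \partial_j \partial'_k \mathbf{D}\right|_{\xi'=\xi}, \qquad \Gamma^*_{ijk} = -\left.\partial_i \partial'_j \partial'_k \mathbf{D}\right|_{\xi'=\xi}.
\end{equation*}
The single structural fact I will lean on is that, because a divergence is nonnegative and vanishes exactly on the diagonal, every \emph{first-order} derivative $\partial_i \mathbf{D}$ and $\partial'_j \mathbf{D}$ vanishes there; this is the two-sided minimality of $\mathbf{D}$ on the diagonal.

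Next I would substitute $\tilde{\mathbf{D}} = T(\mathbf{D})$ and differentiate by the chain and Leibniz rules. For the metric, $\partial_i \partial'_j T(\mathbf{D}) = T''(\mathbf{D})(\partial_i \mathbf{D})(\partial'_j \mathbf{D}) + T'(\mathbf{D})\, \partial_i \partial'_j \mathbf{D}$; on the diagonal the first term dies because $\partial_i \mathbf{D} = 0$, while $\mathbf{D} = 0$ turns $T'(\mathbf{D})$ into $T'(0)$, yielding $\tilde{g}_{ij} = T'(0) g_{ij}$. The same calculation for the three-fold derivatives defining $\Gamma_{ijk}$ and $\Gamma^*_{ijk}$ produces a Leibniz expansion in which every monomial carrying a factor $T''$ or $T'''$ also carries at least one first-order derivative of $\mathbf{D}$; all such monomials vanish on the diagonal, and only $T'(0)$ times the corresponding third derivative of $\mathbf{D}$ survives. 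This gives $\tilde{\Gamma}_{ijk} = T'(0)\Gamma_{ijk}$ and, by the symmetric computation in the primed variables, $\tilde{\Gamma}^*_{ijk} = T'(0)\Gamma^*_{ijk}$, which is exactly \eqref{eqn:monotone.transformation.divergence}.

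The two remaining assertions then follow formally. Raising the metric-paired index with the inverse metric gives $\tilde{g}^{kl} = (T'(0))^{-1} g^{kl}$ while $\tilde{\Gamma}_{ijl} = T'(0)\Gamma_{ijl}$, so the scalar $T'(0)$ cancels and $\tilde{\Gamma}_{ij}{}^k = \Gamma_{ij}{}^k$; likewise $\tilde{\Gamma}^*_{ij}{}^k = \Gamma^*_{ij}{}^k$. Since $\nabla$ and $\tilde{\nabla}$ (respectively $\nabla^*$ and $\tilde{\nabla}^*$) have identical Christoffel symbols of the second kind in every chart, they coincide as affine connections, and in particular their curvature tensors agree.

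The computation is essentially bookkeeping, so the only place demanding care — the ``main obstacle'' — is verifying that in the Leibniz expansion of the third-order derivatives no surviving term escapes the vanishing mechanism. Concretely, one must confirm that each monomial containing $T''$ or $T'''$ retains at least one first-order factor $\partial_i \mathbf{D}$, $\partial_j \mathbf{D}$, or $\partial'_k \mathbf{D}$, and here it matters that first derivatives vanish on the diagonal in \emph{both} the primed and unprimed variables. This is precisely what the two-sided minimality of the divergence guarantees, and it is what forces all the nonlinearity of $T$ beyond its linearization $T'(0)$ at the origin to drop out.
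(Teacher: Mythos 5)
Your proof is correct and is essentially the computation the paper has in mind: the paper omits the details, saying only that the lemma ``can be proved again by a computation'' (citing similar reasoning in \cite{W18}), and your argument via Eguchi's relations --- chain/Leibniz expansion of $T(\mathbf{D})$ plus the vanishing of all first-order derivatives of $\mathbf{D}$ on the diagonal, so that every $T''$ or $T'''$ monomial dies and only $T'(0)$ times the corresponding derivative of $\mathbf{D}$ survives --- is precisely that computation, including the correct cancellation of $T'(0)$ when passing to Christoffel symbols of the second kind. The only point worth making explicit is that $T'(0)>0$ is implicitly forced by the hypothesis that $\tilde{\mathbf{D}}$ induces a dualistic structure (otherwise $\tilde{g}$ would be degenerate), which licenses your inversion $\tilde{g}^{kl} = (T'(0))^{-1} g^{kl}$.
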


\begin{remark}
By Lemma \ref{lemma:equi.geo}, we say that two divergences ${\bf D}$ and $\tilde{{\bf D}}$ are equivalent if there exists $T$ (as in Lemma \ref{lemma:equi.geo} with $T'(0) = 1$) such that $\tilde{\mathbf{D}} = T(\mathbf{D})$. Clearly this defines an equivalence relation among divergences on a manifold. Theorem \ref{thm:l.c.equi} thus states that the $L^{(\alpha)}$-divergence is equivalent to a conformal divergence.
\end{remark}

Theorem \ref{thm:l.c.equi} motivates us to study conformal divergences in general. Recall that two torsion-free affine connections $\nabla$ and $\tilde{\nabla}$ are projectively equivalent if there exists a $1$-form $\tau$ such that
\[
\nabla_X Y = \tilde{\nabla}_Y X + \tau(X)Y + \tau(Y)X
\]
for any vector fields $X$ and $Y$. For its geometric interpretation see \cite[p.17]{NS94}. In particular, $\nabla$ and $\tilde{\nabla}$ have the same geodesics up to time reparameterizations. By definition, $\nabla$ is projectively flat if it is projectively equivalent to a flat connection. When considering the $L^{(\alpha)}$-divergence or a conformal divergence, we think of $\mathcal{M}$ (equal to $\Omega$ as a set) as a manifold, and $\xi$ is the primal (global) coordinate system with values in the convex set $\Omega$.

\begin{proposition}  \label{prop:conformal}
Let $(\mathcal{M}, g, \nabla, \nabla^*)$ be the statistical manifold induced by a conformal divergence $\mathbf{D}_{\phi, \kappa}$.
\begin{itemize}
\item[(i)] The primal connection $\nabla$ is projectively flat and the primal geodesics are, up to time reparameterization, straightlines in the $\xi$-coordinate system. (In fact, using the language of \cite[Section 8.4]{AN00}, $\nabla$ is $(-1)$-conformally flat and $\nabla^*$ is $1$-conformally flat.)
\item[(ii)] $\nabla$ has constant sectional curvature $\lambda \in \mathbb{R}$ with respect to $g$ if and only if
\begin{equation} \label{eqn:constant.curvature.condition}
\frac{1}{\kappa(\xi)} \equiv \lambda \phi(\xi) + a + \sum_{i = 1}^n b_i \xi^i
\end{equation}
for some real constants $a$ and $b^i$. In this case, the dual sectional curvature is also constant and is equal to $\lambda$.
\end{itemize}
\end{proposition}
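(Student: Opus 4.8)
The plan is to work entirely in the primal coordinate system $\xi$ and to reduce every assertion to the corresponding (known) statement for the Bregman divergence via the standard formulas expressing the induced metric and connection coefficients as derivatives of the divergence on the diagonal $\xi' = \xi$ (see \cite{A16}). For $\mathbf{B}_\phi$ these give the dually flat structure $g_{ij} = \partial_i\partial_j\phi$, $\Gamma_{ijk} = 0$ (so $\nabla$ is flat with $\xi$-straight-line geodesics), and $\Gamma^*_{ijk} = \partial_i\partial_j\partial_k\phi$. Since $\kappa$ depends only on the left argument $\xi$, differentiating $\mathbf{D}_{\phi,\kappa} = \kappa(\xi)\mathbf{B}_\phi$ and using that $\mathbf{B}_\phi$ and its first dual-derivative vanish on the diagonal yields $\tilde g_{ij} = \kappa\, g_{ij}$ (positive definite as $\kappa>0$) together with
\[
\tilde\Gamma_{ijk} = \partial_i\kappa\, g_{jk} + \partial_j\kappa\, g_{ik}, \qquad \tilde\Gamma^*_{ijk} = \kappa\,\Gamma^*_{ijk} - \partial_k\kappa\, g_{ij}.
\]
Raising an index with $\tilde g^{kl} = \kappa^{-1}g^{kl}$ and writing $\tau := d\log\kappa$ gives the clean forms $\tilde\Gamma_{ij}{}^{l} = \tau_i\delta_j^l + \tau_j\delta_i^l$ and $\tilde\Gamma^*_{ij}{}^{l} = \Gamma^*_{ij}{}^{l} - \tau^l g_{ij}$.

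For part (i), the expression $\tilde\Gamma_{ij}{}^{l} = \tau_i\delta_j^l + \tau_j\delta_i^l$ exhibits $\tilde\nabla$ as the projective modification of the flat Bregman connection by the exact $1$-form $\tau$; hence $\tilde\nabla$ is projectively equivalent to a flat connection, i.e.\ projectively flat, and it shares its unparameterized geodesics, which are the $\xi$-straight lines. The Kronecker-delta (projective) form of $\tilde\Gamma_{ij}{}^{l}$ and the $g(X,Y)\,\mathrm{grad}$-type form of $\tilde\Gamma^*_{ij}{}^{l}$ are precisely the $\alpha = -1$ and $\alpha = +1$ conformal transformations of a flat connection in the sense of \cite[Section 8.4]{AN00}, giving the parenthetical statement.

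For part (ii), I would compute the curvature of $\tilde\nabla$ directly from $\tilde\Gamma_{ij}{}^{l} = \tau_i\delta_j^l + \tau_j\delta_i^l$. Because $\tau$ is closed, the antisymmetric first-derivative terms cancel, and collecting the quadratic terms collapses the curvature to
\[
\tilde R_{ijk}{}^{l} = P_{ik}\delta_j^l - P_{jk}\delta_i^l, \qquad P_{ik} = \partial_i\tau_k - \tau_i\tau_k .
\]
The decisive simplification, which is the crux, is to pass to $h := 1/\kappa$: a short computation shows $P_{ik} = -h^{-1}\partial_i\partial_k h = -\kappa\,\partial_i\partial_k(1/\kappa)$, so the second-order expression in $\kappa$ becomes a plain Hessian of $1/\kappa$. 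Comparing $\tilde R_{ijk}{}^{l}$ with the space-form model $\lambda(\tilde g_{jk}\delta_i^l - \tilde g_{ik}\delta_j^l)$ and using $\tilde g_{ik} = \kappa\,\partial_i\partial_k\phi$, the constant-curvature requirement reduces to the single identity $P_{ik} = -\lambda\,\tilde g_{ik}$, that is, $\partial_i\partial_k\bigl(\tfrac1\kappa - \lambda\phi\bigr) = 0$ for all $i,k$. This holds iff $\tfrac1\kappa - \lambda\phi$ is affine in $\xi$, which is exactly \eqref{eqn:constant.curvature.condition}.

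Finally, for the dual curvature I would invoke the general duality identity $g(R(X,Y)Z,W) = -g(R^*(X,Y)W,Z)$ between the curvature tensors of a pair of dual connections; substituting the space-form expression for $R$ shows immediately that $R^*(X,Y)W = \lambda\bigl(g(Y,W)X - g(X,W)Y\bigr)$, so the dual sectional curvature is also the constant $\lambda$. The only genuine obstacle is the curvature computation in part (ii), and everything there hinges on recognizing that the natural variable is $1/\kappa$ rather than $\kappa$; once this substitution is made, both the constant-curvature condition and its affine characterization fall out with no further effort.
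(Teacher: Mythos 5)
Your proposal is correct and follows essentially the same route as the paper's proof: compute the coefficients in the $\xi$-coordinates, $g_{ij} = \kappa\,\partial_i\partial_j\phi$ and $\Gamma_{ij}{}^{k} = \tau_i\delta_j^k + \tau_j\delta_i^k$ with $\tau = d\log\kappa$, read off projective flatness for (i), and reduce the curvature tensor to $R_{ijk}{}^{\ell} = \kappa\bigl(\partial_{jk}\tfrac{1}{\kappa}\,\delta_i^{\ell} - \partial_{ik}\tfrac{1}{\kappa}\,\delta_j^{\ell}\bigr)$, so that constant curvature $\lambda$ becomes $\partial_{ik}\bigl(\tfrac{1}{\kappa} - \lambda\phi\bigr) = 0$, which integrates to \eqref{eqn:constant.curvature.condition}. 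You merely make explicit several steps the paper compresses into ``a direct computation''---the diagonal-derivative derivation of the coefficients, the identity $P_{ik} = \partial_i\tau_k - \tau_i\tau_k = -\kappa\,\partial_{ik}(1/\kappa)$, and the standard duality identity $g(R(X,Y)Z,W) = -g(R^*(X,Y)W,Z)$ for the dual-curvature claim---and all of these check out.
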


\begin{remark}
Note that if \eqref{eqn:constant.curvature.condition} holds then one may absorb the linear terms in the definition of $\phi$. On the other hand, we observe that if $\lambda < 0$ then $\lambda \phi$ is concave. Since on $\mathbb{R}^d$ there are no non-trivial positive concave functions, from \eqref{eqn:constant.curvature.condition} we see that if the sectional curvature is constant and negative, the domain $\Omega$ must be a proper subset of $\mathbb{R}^d$. 
\end{remark}

\begin{proof}[of Proposition \ref{prop:conformal}]
Consider the dualistic structure $(g, \nabla, \nabla^*)$ induced by the conformal divergence. Consider the Euclidean coordinate $\xi$ on $\Omega$. By a direct computation, the coefficients of $g$ and $\nabla$ are given by
\begin{equation} \label{eqn:conformal.coefficients}
\begin{split}
g_{ij}(\xi) &= \kappa(\xi) \partial_i \partial_j \phi(\xi), \\
\Gamma_{ij}\mathstrut^{k}(\xi) &= \frac{\partial_i \kappa(\xi)}{\kappa(\xi)} \delta_j^k + \frac{\partial_j \kappa(\xi)}{\kappa(\xi)} \delta_i^k. \\
\end{split}
\end{equation}
Since $\kappa > 0$, the $1$-form $\tau = d \log \kappa$ is well-defined. From \eqref{eqn:conformal.coefficients}, we have that $\nabla_X Y = \tilde{\nabla}_Y X + \tau(X)Y + \tau(Y)X$ we $\tilde{\nabla}$ is the Euclidean flat connection on $\Omega$. Thus $\nabla$ is projectively flat and we have (i). A further computation shows that
\begin{equation} \label{eqn:conformal.curvature.coefficients}
R_{ijk}\mathstrut^{\ell} (\xi) = \kappa(\xi) \left( \partial_{jk} \frac{1}{\kappa}(\xi) \delta_i^{\ell} - \partial_{ik} \frac{1}{\kappa}(\xi) \delta_j^{\ell}\right).
\end{equation}
Using \eqref{eqn:conformal.curvature.coefficients}, we see that $\nabla$ has constant sectional curvature $\lambda \in \mathbb{R}$ with respect to $g$ (see \cite[Definition 12]{W18}) if and only if
\[
\kappa(\xi) \partial_{jk} \frac{1}{\kappa}(\xi) = \kappa(\xi) g_{jk}(\xi) = \lambda  \kappa(\xi) \partial_{jk}\phi(\xi),
\]
which is equivalent to \eqref{eqn:constant.curvature.condition} after integration. \qed
\end{proof}

\section{Realization by affine immersion} \label{sec:affine.immersion}
Consider a statistical manifold $(\mathcal{M}, g, \nabla, \nabla^*)$. In \cite[Theorem 18]{W18} we proved that if both $\nabla$ and $\nabla^*$ are dually projectively flat with constant sectional curvature $-\alpha < 0$, then one can define intrinsically a local divergence of $L^{(\alpha)}$-type which induces the given geometric structure. In this result, a key idea is that the primal and dual coordinates are related by an optimal transport map (this leads to the self-dual representation given by  \eqref{eqn:L.alpha.self.dual} below). In fact, by \cite[Theorem 8.3]{AN00}, if a statistical manifold has constant sectional curvature, then we automatically have dual projective flatness. Thus the condition about projective flatness is redundant and we may modify the statement as follows:

\begin{theorem} \cite[Theorem 18]{W18} \label{thm:intrinsic}
The $L^{(\alpha)}$-divergence is a (local) intrinsic divergence for a statistical manifold with constant negative sectional curvature.
\end{theorem}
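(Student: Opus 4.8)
The plan is to reconstruct a representing $L^{(\alpha)}$-divergence from the intrinsic data $(g, \nabla, \nabla^*)$, routing the argument through the conformal picture of Theorem \ref{thm:l.c.equi} and Proposition \ref{prop:conformal}. The first step is to observe, following \cite[Theorem 8.3]{AN00}, that constant sectional curvature $-\alpha$ already forces the structure to be dually projectively flat; in particular $\nabla$ is projectively flat. Projective flatness then supplies, on a neighborhood of any point, a distinguished coordinate system $\xi = (\xi^i)$, which I identify with an open convex set $\Omega \subset \mathbf{R}^n$, in which the $\nabla$-geodesics are straight lines up to reparameterization. This $\xi$ plays exactly the role of the primal coordinate in \eqref{eqn:L.alpha.divergence}, and it is the analogue of the affine coordinates of a dually flat manifold.

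Next I would read off the local form of the structure in these coordinates. Projective flatness with straight-line geodesics means $\Gamma_{ij}{}^{k} = \tau_i \delta_j^k + \tau_j \delta_i^k$ for some $1$-form $\tau$, exactly the shape of the connection coefficients in \eqref{eqn:conformal.coefficients}. By the theory of conformally-projectively flat statistical structures (the $(-1)$-conformal flatness already noted in Proposition \ref{prop:conformal}(i) and \cite[Section 8.4]{AN00}), such a structure is realized by a conformal transformation of a flat Bregman structure: concretely, I would recover the positive factor $\kappa$ by integrating $\tau = d\log\kappa$ and then a convex potential $\phi$ by integrating the Hessian equation $g_{ij}/\kappa = \partial_i\partial_j\phi$. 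At this stage the given geometry is exhibited as the one induced by a conformal divergence $\mathbf{D}_{\phi,\kappa}$ as in \eqref{eqn:conformal.divergence}.

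Imposing the curvature hypothesis through Proposition \ref{prop:conformal}(ii) with $\lambda = -\alpha$ pins down the rigid relation $1/\kappa \equiv -\alpha \phi + a + \sum_i b_i \xi^i$. Absorbing the affine term into the definition of $\phi$, I may take $1/\kappa = -\alpha\phi$, so that $\phi < 0$ and $\varphi := \frac{1}{\alpha}\log(-\phi)$ is $\alpha$-exponentially concave with $-e^{\alpha\varphi} = \phi$ and $\kappa = -\frac{1}{\alpha\phi}$. These are precisely the data appearing in Theorem \ref{thm:l.c.equi}, so that $\mathbf{D}_{\phi,\kappa} \equiv T(\mathbf{L}^{(\alpha)}_{\varphi})$; and by Lemma \ref{lemma:equi.geo} the monotone transformation $T$ does not change $(g, \nabla, \nabla^*)$. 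Hence $\mathbf{L}^{(\alpha)}_{\varphi}$ is a local intrinsic divergence inducing the prescribed structure.

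I expect the crux to be the reconstruction in the middle step: producing $\kappa$ and $\phi$ by integration requires verifying the integrability conditions (exactness of $\tau$ and total symmetry of $\partial_k(g_{ij}/\kappa)$) that make them well-defined, and then confirming through the curvature formula \eqref{eqn:conformal.curvature.coefficients} that constant curvature forces \eqref{eqn:constant.curvature.condition}. It is this last relation, rather than conformal flatness alone (which holds for any projectively flat statistical structure), that upgrades a generic conformal divergence to a genuine $L^{(\alpha)}$-divergence. An alternative route, emphasized in the discussion preceding the theorem, is to reconstruct the optimal transport map relating the primal coordinate $\xi$ to its dual directly from the dual projective flatness and to write down the self-dual representation of the divergence; the same integrability issue then reappears as the existence of the transport potential.
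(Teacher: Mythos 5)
Your proposal is correct in substance, but it takes a genuinely different route from the proof the paper relies on. The paper does not reprove the statement: it cites \cite[Theorem 18]{W18}, whose argument constructs, from \emph{dual} projective flatness, a pair of affine coordinate systems $\xi$ and $\eta$ related by a gradient-type (optimal transport) map, produces the potential $\varphi$ and its $\alpha$-conjugate $\psi$, and recovers the divergence through the self-dual representation \eqref{eqn:L.alpha.self.dual}; the only new observation in this paper is that \cite[Theorem 8.3]{AN00} makes the projective-flatness hypothesis redundant. You instead stay entirely within the conformal picture of Section 2: affine coordinates from primal projective flatness, reconstruction of $\kappa$ from $\tau = d\log\kappa$ and of $\phi$ from $g_{ij}/\kappa = \partial_i\partial_j\phi$, the constant-curvature rigidity $1/\kappa \equiv -\alpha\phi$ (after absorbing affine terms, which leaves $\mathbf{B}_\phi$ and $g$ unchanged), and then Theorem \ref{thm:l.c.equi} with Lemma \ref{lemma:equi.geo} (note $T'(0)=1$ for $T(x) = \frac{1}{\alpha}(e^{\alpha x}-1)$, so equivalence is genuine). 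Two remarks on the step you rightly flag as the crux. First, the integrability conditions do hold, but for reasons worth making explicit: total symmetry of $\partial_k\bigl(g_{ij}/\kappa\bigr)$ is exactly the Codazzi equation for the statistical structure written with $\Gamma_{ij}{}^{k} = \tau_i\delta_j^k + \tau_j\delta_i^k$, while exactness of $\tau$ is \emph{forced by the constant-curvature hypothesis}: computing $R_{ijk}{}^{\ell}$ for a connection of this projective form produces a term $(\partial_i\tau_j - \partial_j\tau_i)\delta_k^{\ell}$, and tracing over $\ell = k$ against the constant-curvature form $R_{ijk}{}^{\ell} = \lambda(g_{jk}\delta_i^{\ell} - g_{ik}\delta_j^{\ell})$ yields $(n+1)(d\tau)_{ij} = 0$. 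So the curvature hypothesis enters twice in your argument, not only through \eqref{eqn:constant.curvature.condition}. Second, Proposition \ref{prop:conformal}(ii) as stated presupposes a structure already induced by a conformal divergence, so your reconstruction step is logically prior to invoking it; you recognize this ordering, but a complete write-up must verify that the reconstructed $\mathbf{D}_{\phi,\kappa}$ reproduces the given $(g,\nabla)$ via \eqref{eqn:conformal.coefficients} (the dual connection then matches automatically, being determined by $g$ and $\nabla$). As for what each approach buys: yours is more internal to this paper and needs only the primal affine coordinates, while the route of \cite{W18} additionally delivers the dual coordinate system, the transport-map interpretation, and the self-dual representation \eqref{eqn:L.alpha.self.dual} --- which is precisely the ingredient the paper reuses in the proof of Theorem \ref{thm:curvature}.
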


On the other hand, for a (simply connected) statistical manifold with constant sectional curvature, Kurose \cite{K94} defined globally a canonical, intrinsic divergence using affine differential geometry and proved that it satisfies a generalized Pythagorean theorem. In this section we show that if $(\Omega, g, \nabla, \nabla^*)$ is induced by an $L^{(\alpha)}$-divergence ${\bf L}_{\varphi}^{(\alpha)}$, then the geometric divergence is the conformal divergence ${\bf D}_{\phi, \kappa}$ in  \eqref{eqn:L.alpha.as.conformal}. While these canonical divergences are equivalent, our approach in \cite{PW18} \cite{W18} gives an explicit construction in Kurose's work, covers the Bregman and $L^{(\alpha)}$-divergences under the same framework, and suggests previously unknown connections with optimal transport maps. 

\medskip
To state the main result we recall some concepts of affine differential geometry; for details see \cite{NS94} and \cite{Mat10}. Let ${\mathcal{M}}$ be an $n$-dimensional manifold. An affine immersion of $M$ into $\mathbb{R}^{n+1}$ consists of an immersion $f: M \rightarrow \mathbb{R}^{n+1}$ and a transversal vector field ${\bf n}$ with values in $\mathbb{R}^{n+1}$ on $M \cong f(M)$. The last statement means that
\[
T_{f(p)} \mathbb{R}^{n+1} = f_*(T_p M) \oplus \mathrm{span}({\bf n}(p))
\]
for all $p \in M$. Let $\overline{\nabla}$ be the standard (flat) affine connection on $\mathbb{R}^{n+1}$. Then the covariant derivative decomposes as
\begin{equation} \label{eqn:affine.immersion.definition}
\overline{\nabla}_X f_*Y = f_*(\nabla_X Y) + g(X, Y) {\bf n}.
\end{equation}
We call $\nabla$ and $g$ the induced connection and bilinear form respectively. If the induced connection and bilinear form are equal to the Riemannian metric and primal connection of a dualistic structure $(g, \nabla, \nabla^*)$, we say that the affine immersion realizes the given structure. By \cite[Theorem 5.3]{Mat10}, this is possible when the statistical manifold is simply connected and $1$-conformally flat. This is true in particular when the statistical manifold has constant sectional curvature.

Let $(\mathbb{R}^{n+1})^*$ be the dual space of $\mathbb{R}^n$, and let $\langle . , . \rangle$ be the dual pairing. Given an affine immersion $(f, {\bf n})$, the conormal vector field ${\bf n}^* : M \rightarrow (\mathbb{R}^{n+1})^*$ is defined by the conditions
\begin{equation} \label{eqn:conormal.field}
\langle {\bf n}^*(p), {\bf n}(p) \rangle = 1, \quad \langle {\bf n}^*(p), f_* X \rangle = 0 \quad  \forall X \in T_pM.
\end{equation}

\begin{definition}[Kurose's geometric divergence]
	For an affine immersion $(f, {\bf n})$ with conormal field ${\bf n}^*$, the geometric divergence is defined by
	\begin{equation} \label{eqn:geometric.divergence}
	\rho(p, q) := \langle f(p) - f(q), {\bf n}^*(q) \rangle, \quad p, q \in M.
	\end{equation}
\end{definition}

In \cite{K94} it was shown that if $(g, \nabla)$ is $1$-conformally flat, then the geometric divergence does not depend on the choice of the immersion and recovers the given dualistic structure. (The dual connection $\nabla^*$ is uniquely determined given $g$ and $\nabla$.) Hence, it can be viewed as a {\it canonical divergence} (see the next section for more discussion).

The following result connects the $L^{(\alpha)}$-divergence with the geometric divergence. It shows that the geometric divergence, the $L^{(\alpha)}$-divergence and the conformal divergence are all equivalent. In particular, they are all intrinsically defined (at least locally) for the given dualistic structure.

\begin{theorem} \label{thm:Kurose.as.conformal}
	Consider a convex domain $\Omega \subset \mathbb{R}^n$ equipped with an $L^{(\alpha)}$-divergence ${\bf L}_{\varphi}^{(\alpha)}$ and its induced geometry $(g, \nabla, \nabla^*)$. Let $\phi = -e^{\alpha \varphi}$ and $\kappa = -\frac{1}{\alpha \phi}$ as in Theorem \ref{thm:l.c.equi}. Consider the affine immersion defined by
	\begin{equation} \label{eqn:immersion}
	\begin{split}
	f(\xi) &= \kappa(\xi) (\xi^1,\xi^2,...,\xi^n,1),\\
	{\bf n}(\xi) &= \alpha f(\xi),
	\end{split}
	\end{equation}
	where $\xi$ is the Euclidean coordinate system on $\Omega$. Then this affine immersion realizes $(g, \nabla)$. Moreover, the geometric divergence is given by
	\begin{equation} \label{eqn:Kurose.as.conformal}
	\rho(\xi, \xi') = \mathbf{D}_{\phi, \kappa}(\xi, \xi') = \frac{1}{\alpha} \left( e^{\mathbf{L}_{\varphi}^{(\alpha)}[\xi : \xi']} - 1\right).
	\end{equation}
\end{theorem}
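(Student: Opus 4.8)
The plan is to establish both assertions by direct computation with the structure equation \eqref{eqn:affine.immersion.definition}. Since $\overline{\nabla}$ is the flat connection on $\mathbb{R}^{n+1}$, in the Euclidean coordinate $\xi$ the equation reads $\partial_i\partial_j f = \sum_k \Gamma_{ij}\mathstrut^{k}\,\partial_k f + g_{ij}\,\mathbf{n}$, so the whole proof reduces to decomposing the ordinary second partials of $f$ along the adapted frame $\{\partial_1 f,\dots,\partial_n f,\mathbf{n}\}$ and reading off the coefficients. First I would record the derivatives: writing $v(\xi) = (\xi^1,\dots,\xi^n,1)$ and letting $E_1,\dots,E_n$ be the first $n$ standard basis vectors of $\mathbb{R}^{n+1}$, one has $\partial_i f = (\partial_i\kappa)\,v + \kappa E_i$ and $\partial_i\partial_j f = (\partial_i\partial_j\kappa)\,v + (\partial_j\kappa)E_i + (\partial_i\kappa)E_j$.

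The conceptual step---the ``once conceived'' computation---is to invert these relations: since $f = \kappa v = \tfrac{1}{\alpha}\mathbf{n}$ and $E_i = \tfrac{1}{\kappa}\big(\partial_i f - (\partial_i\kappa)v\big)$, I can rewrite $v = \tfrac{1}{\alpha\kappa}\mathbf{n}$ and substitute everything back into $\partial_i\partial_j f$. This expresses the second partials purely in the frame $\{\partial_k f,\mathbf{n}\}$ and yields
\[
\Gamma_{ij}\mathstrut^{k} = \frac{\partial_i\kappa}{\kappa}\delta_j^k + \frac{\partial_j\kappa}{\kappa}\delta_i^k, \qquad g_{ij} = \frac{1}{\alpha\kappa}\Big(\partial_i\partial_j\kappa - \frac{2\,\partial_i\kappa\,\partial_j\kappa}{\kappa}\Big).
\]
Linear independence of the frame (hence that $f$ is an immersion and $\mathbf{n}$ is transversal) drops out of the same relations: the last coordinate and the $E_i$ components together force any vanishing linear combination to be trivial. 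To confirm that these match \eqref{eqn:conformal.coefficients}, I substitute $\phi = -\tfrac{1}{\alpha\kappa}$; a one-line differentiation gives $\kappa\,\partial_i\partial_j\phi$ equal to the $g_{ij}$ above, and the $\Gamma_{ij}\mathstrut^{k}$ already agree verbatim. This proves that $(f,\mathbf{n})$ realizes $(g,\nabla)$.

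For the divergence, I would determine the conormal field $\mathbf{n}^*$ from \eqref{eqn:conormal.field}. Using $\langle\mathbf{n}^*,\partial_i f\rangle = 0$ together with $\langle\mathbf{n}^*,f\rangle = \tfrac{1}{\alpha}$ (equivalently $\langle\mathbf{n}^*,v\rangle = \tfrac{1}{\alpha\kappa}$), the orthogonality conditions solve to $\mathbf{n}^*(\xi) = \big(-D\phi(\xi),\, D\phi(\xi)\cdot\xi - \phi(\xi)\big)$, where the first $n$ entries are $-\partial_i\phi$. Substituting into \eqref{eqn:geometric.divergence} and splitting $\rho(\xi,\xi') = \langle f(\xi),\mathbf{n}^*(\xi')\rangle - \langle f(\xi'),\mathbf{n}^*(\xi')\rangle$, the pairing $\langle v(\xi),\mathbf{n}^*(\xi')\rangle$ collapses to $-D\phi(\xi')\cdot(\xi-\xi') - \phi(\xi')$, while $\langle f(\xi'),\mathbf{n}^*(\xi')\rangle = \tfrac{1}{\alpha}$ by the normalization. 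Thus $\rho(\xi,\xi') = \kappa(\xi)\big(-D\phi(\xi')\cdot(\xi-\xi') - \phi(\xi')\big) - \tfrac{1}{\alpha}$; the identity $\kappa\phi \equiv -\tfrac{1}{\alpha}$ then supplies the missing $\kappa(\xi)\phi(\xi)$ term and cancels the constant $\tfrac{1}{\alpha}$, giving exactly $\kappa(\xi)\mathbf{B}_\phi[\xi:\xi'] = \mathbf{D}_{\phi,\kappa}$. The final equality in \eqref{eqn:Kurose.as.conformal} is then immediate from Theorem \ref{thm:l.c.equi}.

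I expect the main obstacle to be organizational rather than deep: the crux is recognizing that the ambient vectors $v$ and $E_i$ should be re-expressed through $\mathbf{n}$ and the $\partial_i f$, after which every identity is forced. A secondary point to watch is the exact cancellation of the additive constant $\tfrac{1}{\alpha}$ in the divergence, which works only because of the specific normalization $\kappa = -\tfrac{1}{\alpha\phi}$; keeping track of this constant, and of which argument $\xi$ versus $\xi'$ carries $\kappa$ and $D\phi$, is where a careless computation would go wrong.
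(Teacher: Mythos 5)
Your proposal is correct and follows essentially the same route as the paper's proof: verify the structure equation \eqref{eqn:affine.immersion.definition} by decomposing $\partial_i\partial_j f$ in the adapted frame $\{\partial_1 f,\dots,\partial_n f,\mathbf{n}\}$ (the paper matches the resulting coefficients against the $L^{(\alpha)}$ expressions from \cite[Section 5]{W18}, while you match against \eqref{eqn:conformal.coefficients}, which is equivalent by Theorem \ref{thm:l.c.equi}), then solve the conditions \eqref{eqn:conormal.field} for $\mathbf{n}^*$ and substitute into \eqref{eqn:geometric.divergence}. One point in your favor: your conormal field $\big(-D\phi(\xi),\, D\phi(\xi)\cdot\xi - \phi(\xi)\big)$ is the correct one for general $\alpha$, whereas the paper's display \eqref{eqn:conormal.field.expression} has $-\alpha\phi(\xi)$ in the last coordinate, which satisfies $\langle \mathbf{n}^*, \mathbf{n}\rangle = 1$ and $\langle \mathbf{n}^*, \partial_i f\rangle = 0$ only when $\alpha = 1$ (an apparent typo in the paper); your version, via $\kappa\phi \equiv -\tfrac{1}{\alpha}$, is exactly what makes the constant $\tfrac{1}{\alpha}$ cancel and yields $\rho = \mathbf{D}_{\phi,\kappa}$.
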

\begin{proof}
	The choice of our immersion \eqref{eqn:immersion} is motivated by the proof of \cite[Proposition 2.7]{NS94}. It is easy to see that $f$ is an immersion and ${\bf n}$ is transversal. Let
	$\tilde{{\bf e}}_j := \frac{\partial}{\partial \xi^j} f$ and $\partial_k \tilde{{\bf e}}_j := \frac{\partial}{\partial \xi^k} \frac{\partial}{\partial \xi^j} f$. Then, it can be verified by a straightforward computation that 
	\begin{equation} \label{eqn:immersion.identity}
	\partial_k \tilde{{\bf e}}_j \equiv \Gamma_{kj}\mathstrut^{m} \tilde{{\bf e}}_m + g_{ij} (\alpha f).
	\end{equation}
	We refer the reader to \cite[Section 5]{W18} for expressions of the coefficients ${\Gamma_{ij}}^k$. Thus the affine immersion $(f, {\bf n})$ realizes the given dualistic structure.
	
	Next we construct the conormal vector field. Using the relations in \eqref{eqn:conormal.field}, we can show that the conormal field is given by
	\begin{equation} \label{eqn:conormal.field.expression}
	{\bf n}^*(p_{\xi}) = (- \partial_1 \phi(\xi), \ldots, -\partial_n \phi(\xi), -\alpha \phi(\xi) + D\phi(\xi) \cdot \xi ).
	\end{equation}
	We obtain \eqref{eqn:Kurose.as.conformal} by plugging \eqref{eqn:conormal.field.expression} into \eqref{eqn:Kurose.as.conformal}.
\end{proof}

\section{Interpretation of sectional curvature}
Consider a statistical manifold $(\mathcal{M}, g, \nabla, \nabla^*)$. Given $q \in \mathcal{M}$ and $\mathbf{v}, \mathbf{w} \in T_q \mathcal{M}$ which are linearly independent, we can define the primal sectional curvature $\mathrm{sec}(\mathbf{v}, \mathbf{w})$ by
\begin{equation} \label{eqn:sectional.curvature}
\mathrm{sec}(\mathbf{v}, \mathbf{w}) := \frac{\langle R(\mathbf{w}, \mathbf{v})\mathbf{v}, \mathbf{w} \rangle}{\|\mathbf{v}\|^2 \|\mathbf{w}\|^2 - \langle \mathbf{v}, \mathbf{w}\rangle^2},
\end{equation}
where $\langle . , . \rangle$ is the Riemannian inner product and $R$ is the primal curvature tensor. Similarly, we can define the dual sectional curvature $\mathrm{sec}^*$. What are the geometric interpretations of these sectional curvatures? Interestingly, to the best of our knowledge, this natural question has not been satisfactorily answered in the literature. 

For motivations, let us consider a Riemannian manifold $(\mathcal{M}, g)$. In this case, it is well-known that the sectional curvature (given by \eqref{eqn:sectional.curvature} using the Levi-Civita connection) can be interpreted in terms of the Riemannian distance, defined by
\begin{equation} \label{eqn:Riemannian.distance}
d(x, y) := \inf_{\gamma: \gamma(0) = x, \gamma(1) = y}  \int_0^1 \|\dot{\gamma}(t)\| dt,
\end{equation}
between a pair of geodesics. For $t_1, t_2 > 0$ small, let $r(t_1) = \exp_q(t_1\mathbf{v})$ and $p(t_2) = \exp_q(t_2 \mathbf{w})$ be geodesics starting at $q$, where $\exp_q$ is the exponential map. Then, we have
\begin{equation} \label{eqn:Riemannian.distance.expansion}
    \begin{split}
    &  d^2(r(t_1), p(t_2)) \\
    &= \| \mathbf{v} \|^2 t_1^2 + \| \mathbf{w} \|^2 t_2^2 - 2 \langle \mathbf{v}, \mathbf{w} \rangle t_1 t_2 - \frac{1}{3} \langle R(\mathbf{w}, \mathbf{v})\mathbf{v}, \mathbf{w} \rangle t_1^2 t_2^2 + \cdots,
    \end{split}
\end{equation}
where the higher order terms are omitted (see \cite{S12}). This implies that
\begin{equation} \label{eqn:Riemannian.sectional.curvature.interpretation}
\begin{split}
& d^2(r(t_1), p(t_2)) - d^2 (r(t_1), q) - d^2(q, p(t_2)) \\
&= -2 \langle \mathbf{v}, \mathbf{w} \rangle t_1 t_2 - \frac{1}{3} (\|\mathbf{v}\|^2 \|\mathbf{w}\|^2 - \langle \mathbf{v}, \mathbf{w} \rangle^2) \mathrm{sec}(\mathbf{v}, \mathbf{w}) t_1^2 t_2^2 + \cdots.
\end{split}
\end{equation}

We look for analogous geometric interpretations for a statistical manifold. Given a statistical manifold $(\mathcal{M}, g, \nabla, \nabla^*)$, in order to formulate a statement in the form of \eqref{eqn:Riemannian.distance.expansion} or \eqref{eqn:Riemannian.sectional.curvature.interpretation}, we need to have an intrinsically defined divergence corresponding to the given geometry. This is the problem about constructing a canonical divergence and was studied by several papers including \cite{HK00} \cite{AA15} \cite{FA18} \cite{FA18b}.

Using the $L^{(\alpha)}$-divergence which is explicit, intrinsically defined and has special properties, in this section we study the geometric interpretation for a statistical manifold with constant sectional curvature $-\alpha \leq 0$. Let $q \in \mathcal{M}$ and ${\bf v}, {\bf w} \in T_q \mathcal{M}$. Motivated by the generalized Pythagorean theorem which holds for the Bregman and $L^{(\alpha)}$-divergences, let
\[
r(t_1) = \exp_q(t_1 {\bf v}) \text{ and } p(t_2) = \exp_q^*(t_2 {\bf w}),
\]
where $\exp_q$ and $\exp_q^*$ are respectively the exponential maps corresponding respectively to the primal and dual connections $\nabla$ and $\nabla^*$. With $\mathbf{D}$ being an intrinsic local $L^{(\alpha)}$-divergence (see Theorem \ref{thm:intrinsic}), consider the expression $H$ defined by
\begin{equation} \label{eqn:H}
H(t_1, t_2) := \mathbf{D}[r(t_1) : p(t_2)] - \mathbf{D}[r(t_1) : q] - \mathbf{D}[q : p(t_2)].
\end{equation}
By the generalized Pythagorean theorem proved in \cite[Theorem 1.2]{PW18} and \cite[Theorem 16]{W18}, if $\langle \mathbf{v}, \mathbf{w} \rangle = 0$ then $H(t_1, t_2) \equiv 0$. This motivates the definition of $H$ and the comparison with \eqref{eqn:Riemannian.sectional.curvature.interpretation}. Note that if $\alpha = 0$ then the manifold is dually flat. In this case, there is a canonical divergence $\mathbf{D}$ of Bregman type. With the Bregman divergence and with $H$ defined by \eqref{eqn:H}, we have the identity $H(t_1, t_2) \equiv - \langle \mathrm{v}, \mathrm{w} \rangle t_1 t_2$.

Now let $\alpha > 0$ and let $\mathbf{D}$ be the canonical (local) $L^{(\alpha)}$-divergence. By \cite[Theorem 18]{W18}, there exists a local coordinate system $\xi$ and an $\alpha$-exponentially concave function $\varphi = \varphi(\xi)$ such that $\mathbf{D}[y : x] = \mathbf{D}^{(\alpha)}_{\varphi} [ \xi_y : \xi_x ]$. Here $\xi_x$ is the primal coordinate of $x \in \mathcal{M}$. Moreover, letting
\[
\eta = \frac{D\varphi(\xi)}{1 - \alpha D\varphi(\xi) \cdot \xi}, \quad \psi(\eta) = \frac{1}{\alpha} \log (1 + \alpha \xi \cdot \eta) - \varphi(\xi),
\]
be respectively the dual coordinate and $\alpha$-conjugate of $\varphi$, we have $\mathbf{D}[y : x] = \mathbf{D}^{(\alpha)}_{\psi} [ \eta_x : \eta_y ]$ and the self-dual representation
\begin{equation} \label{eqn:L.alpha.self.dual}
\mathbf{D}[y : x] = \frac{1}{\alpha} \log (1 + \alpha \xi_y \cdot \eta_x) - \varphi(\xi_y) - \psi(\eta_x).
\end{equation}
As $\alpha \downarrow 0$, these identities reduce to well-known properties of the Bregman divergence \cite[Chapter 1]{A16}. By analyzing carefully the primal and dual geodesics as well as the self-dual representation \eqref{eqn:L.alpha.self.dual}, we have the following result.

\begin{theorem} \label{thm:curvature}
For $t_1, t_2 > 0$ small, we have
\begin{equation} \label{eqn:H.expansion}
\begin{split}
H(t_1, t_2) &= -\langle \mathbf{v}, \mathbf{w} \rangle t_1 t_2  -\alpha \langle \mathbf{v}, \mathbf{w} \rangle \left[ \frac{\| \mathbf{v} \|^2}{3} t_1^3 t_2 + \frac{\|\mathbf{w} \|^2}{3} t_1 t_2^3 + \frac{\langle \mathbf{v}, \mathbf{w} \rangle}{2} t_1^2 t_2^2 \right] \\
 &\quad + \mathrm{higher}\text{ }\mathrm{order}\text{ }\mathrm{terms}.
\end{split}
\end{equation}
\end{theorem}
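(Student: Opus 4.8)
The plan is to evaluate $H$ in closed form through the self-dual representation \eqref{eqn:L.alpha.self.dual} and only then Taylor expand. Writing $\xi_r = \xi_{r(t_1)}$ and $\eta_p = \eta_{p(t_2)}$ and substituting \eqref{eqn:L.alpha.self.dual} into each of the three terms of \eqref{eqn:H}, the contributions $-\varphi(\xi_r)$ and $-\psi(\eta_p)$ cancel, while the leftover $\varphi(\xi_q) + \psi(\eta_q)$ collapses by the duality identity $\varphi(\xi_q) + \psi(\eta_q) = \frac{1}{\alpha}\log(1 + \alpha\, \xi_q \cdot \eta_q)$. This produces
\begin{equation*}
\alpha H(t_1, t_2) = \log \frac{(1 + \alpha\, \xi_r \cdot \eta_p)(1 + \alpha\, \xi_q \cdot \eta_q)}{(1 + \alpha\, \xi_r \cdot \eta_q)(1 + \alpha\, \xi_q \cdot \eta_p)},
\end{equation*}
which depends on the two geodesics only through the primal coordinate $\xi_r$ of $r(t_1)$ and the dual coordinate $\eta_p$ of $p(t_2)$. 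This is the logarithmic analogue of the identity $H \equiv \delta\xi \cdot \delta\eta$ that holds in the Bregman ($\alpha = 0$) case.

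Since $H$ is intrinsic, I am free to compute in any representation of the given geometry, and I would exploit this to center the chart at $q$. Translating $\xi$ gives $\xi_q = 0$, and I would further arrange $D\varphi(0) = 0$, equivalently $\eta_q = 0$, by basing the construction behind Theorem \ref{thm:intrinsic} at $q$. With this normalization the four factors above reduce to one and
\begin{equation*}
H(t_1, t_2) = \frac{1}{\alpha} \log\bigl(1 + \alpha\, \xi_{r(t_1)} \cdot \eta_{p(t_2)} \bigr).
\end{equation*}
Thus the whole problem is reduced to expanding the single scalar $\xi_{r(t_1)} \cdot \eta_{p(t_2)}$.

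It then remains to expand the two geodesic coordinate curves. By Proposition \ref{prop:conformal} the primal geodesic is a straight ray in $\xi$ and the dual geodesic is a straight ray in $\eta$, up to reparameterization. Using the connection coefficients $\Gamma_{ij}{}^{k} = \tau_i \delta^k_j + \tau_j \delta^k_i$ with $\tau = d\log\kappa = -\alpha\, d\varphi$, the primal geodesic ODE integrates to $\xi_{r(t_1)} = \beta(t_1)\, \mathbf{v}$ (in the $\xi$-coordinate representation of $\mathbf{v}$), with $\beta(0)=0$ and $\beta'(0)=1$; centering forces $\tau(q)=0$, hence $\beta''(0)=0$, and one more differentiation together with $g_{ij}(q) = -\partial_i\partial_j\varphi(q)$ gives $\beta(t_1) = t_1 - \tfrac{\alpha}{3}\|\mathbf{v}\|^2 t_1^3 + \cdots$. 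The dual geodesic is treated symmetrically via the $\alpha$-conjugate $\psi$, yielding $\eta_{p(t_2)} = \gamma(t_2)\,\tilde{\mathbf{w}}$ with an analogous cubic correction in $\|\mathbf{w}\|^2$, while $\partial\eta_i/\partial\xi^j|_q = -g_{ij}(q)$ identifies the mixed pairing of the coordinate vectors with $\langle \mathbf{v}, \mathbf{w}\rangle$. Substituting these expansions and expanding $\frac{1}{\alpha}\log(1+\alpha\,\cdot)$ to fourth order then yields \eqref{eqn:H.expansion}. Note that the absence of genuine third-order terms is automatic, since $H(t_1,0) = H(0,t_2) = 0$ forces $t_1 t_2 \mid H$.

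The main obstacle is the geodesic reparameterization: one must solve the geodesic equation to third order and keep careful track of the two distinct ways the constant curvature $-\alpha$ enters the fourth-order coefficients, namely through the cubic corrections in $\beta$ and $\gamma$ (these supply the $\tfrac{\|\mathbf{v}\|^2}{3}$ and $\tfrac{\|\mathbf{w}\|^2}{3}$ terms) and through the concavity of $x \mapsto \frac{1}{\alpha}\log(1+\alpha x)$ (which supplies the $\tfrac{\langle\mathbf{v},\mathbf{w}\rangle}{2}$ term). A secondary technical point is justifying the centering normalization $\xi_q = \eta_q = 0$; if one prefers to avoid invoking homogeneity, the identical expansion can be run directly from the general closed form in the first paragraph, at the cost of tracking extra lower-order quantities that must ultimately cancel.
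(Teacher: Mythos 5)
Your strategy is essentially a streamlined version of the paper's proof: both rest on the self-dual representation \eqref{eqn:L.alpha.self.dual}, the straight-line form of the primal/dual geodesics up to time change, and a fourth-order Taylor expansion. Two things are genuinely different. First, your exact closed form
\begin{equation*}
\alpha H(t_1,t_2)=\log\frac{(1+\alpha\,\xi_r\cdot\eta_p)(1+\alpha\,\xi_q\cdot\eta_q)}{(1+\alpha\,\xi_r\cdot\eta_q)(1+\alpha\,\xi_q\cdot\eta_p)}
\end{equation*}
is correct and is precisely what the paper expands term by term: in the notation of \eqref{eqn:H.s.expression} it reads $\alpha H=\log\bigl[(1+\alpha C_1+\alpha C_2+\alpha C_3)/((1+\alpha C_1)(1+\alpha C_2))\bigr]$, so the closed form organizes the bookkeeping the paper does by brute force. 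Second, your normalization $\xi_q=\eta_q=0$ kills $C_1$, $C_2$ and the quadratic terms $\alpha(D\varphi(q)\cdot v)t_1^2$, $\alpha(D\psi(q)\cdot w)t_2^2$ of \eqref{eqn.s.t.expression.1}--\eqref{eqn.s.t.expression.2}, whereas the paper keeps a general base point and lets these cross terms cancel. But the normalization is asserted, not proved, and it is not innocuous: unlike the Bregman case, you cannot force $D\varphi(\xi_q)=0$ by adding an affine function to $\varphi$, since that changes the $L^{(\alpha)}$-divergence. What is actually needed is the projective gauge freedom of the cost: writing $\hat\xi=(1,\alpha\xi)$, $\hat\eta=(1,\eta)$, the action $\hat\xi\mapsto M\hat\xi$, $\hat\eta\mapsto (M^{-1})^{\top}\hat\eta$ (read projectively) changes $1+\alpha\,\xi\cdot\eta=\langle\hat\xi,\hat\eta\rangle$ only by separable factors absorbable into $\varphi$ and $\psi$, and since $1+\alpha\,\xi_q\cdot\eta_q>0$ one can choose $M$ sending $(\xi_q,\eta_q)$ to $(0,0)$; combined with the intrinsicness of $\mathbf{D}$ (Theorem \ref{thm:intrinsic}) this legitimizes computing $H$ in the new gauge. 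Either spell this out, or use your stated fallback, which is in effect the paper's route.

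The substantive gap is your last step: ``then yields \eqref{eqn:H.expansion}'' is never carried out, and carried out under your own conventions it does not reproduce the printed signs. Your relation $\partial\eta_i/\partial\xi^j|_q=-g_{ij}(q)$ gives $v\cdot w=-\langle\mathbf{v},\mathbf{w}\rangle$ for the mixed pairing, whence
\begin{equation*}
H=\frac{1}{\alpha}\log\bigl(1-\alpha\,\beta(t_1)\gamma(t_2)\langle\mathbf{v},\mathbf{w}\rangle\bigr)
=-\langle\mathbf{v},\mathbf{w}\rangle t_1t_2+\alpha\langle\mathbf{v},\mathbf{w}\rangle\Bigl[\frac{\|\mathbf{v}\|^2}{3}t_1^3t_2+\frac{\|\mathbf{w}\|^2}{3}t_1t_2^3\Bigr]-\frac{\alpha}{2}\langle\mathbf{v},\mathbf{w}\rangle^2t_1^2t_2^2+\cdots,
\end{equation*}
which agrees with \eqref{eqn:H.expansion} in the $t_1t_2$ and $t_1^2t_2^2$ terms but carries the \emph{opposite} sign on the $t_1^3t_2$ and $t_1t_2^3$ terms. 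Note also that $\partial\eta/\partial\xi|_q=-g$ is incompatible with \eqref{eqn:riemannian.inner.product} as printed, which at your normalized point would give $\langle\mathbf{v},\mathbf{w}\rangle=v\cdot w$ and would instead flip the leading term. An exact one-dimensional computation adjudicates the mechanics: for $\alpha=1$, $\varphi=\frac{1}{2}\log\xi$ one has $\eta=1/\xi$, $\mathbf{D}[\xi:\xi']=\log\frac{\xi+\xi'}{2\sqrt{\xi\xi'}}$, geodesics $\xi_{r}(t_1)=\xi_q e^{vt_1/\xi_q}$ and $\eta_{p}(t_2)=\eta_q e^{wt_2/\eta_q}$, and then $H=\log\cosh\frac{a+b}{2}-\log\cosh\frac{a}{2}-\log\cosh\frac{b}{2}$ with $a=vt_1$, $b=wt_2$, whose expansion $\frac{ab}{4}-\frac{a^3b}{48}-\frac{a^2b^2}{32}-\frac{ab^3}{48}$ matches your pipeline's output (using $\langle\mathbf{v},\mathbf{w}\rangle=-vw/4$, $\|\mathbf{v}\|^2=v^2/4$, $\|\mathbf{w}\|^2=w^2/4$) and not the odd-order signs of \eqref{eqn:H.expansion}. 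So either there is a compensating sign convention behind \eqref{eqn:riemannian.inner.product} that you are not tracking, or the odd terms of the printed statement carry a sign slip; in either case, the one sentence asserting the match is exactly the step of your proof that does not close, and you must reconcile the conventions linking \eqref{eqn:riemannian.inner.product}, \eqref{eqn:H.s.expression} and \eqref{eqn:H.expansion} before the argument is complete.
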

\begin{proof}
By \cite[Corollary 2]{W18}, the primal/dual geodesics of $L^{(\alpha)}$-divergence are straight lines in the primal/dual coordinate systems, up to time changes. Thus we can write $\xi_{r}(t_1) = \xi_q + s_1(t_1)v$ and $\eta_{p}(t_2) = \eta_q + s_2(t_2)w$, where $v$ and $w$ are the coordinate representations of ${\bf v}$ and ${\bf w}$, and $s_1$ and $s_2$ are time changes. For notational simplicity we suppress the parameters $t_1$ and $t_2$. Using \cite[(89)]{W18}, we have
\begin{equation} \label{eqn:riemannian.inner.product}
   \langle \mathbf{v},\mathbf{w} \rangle = \left(\frac{v \cdot w}{1 + \alpha (\xi_q \cdot \eta_q) } - \frac{\alpha}{(1 + \alpha (\xi_q \cdot \eta_q))^2}( \eta_q \cdot v) ( \xi_q \cdot w) \right).
\end{equation}
Differentiating \eqref{eqn:L.alpha.self.dual} and using \eqref{eqn:riemannian.inner.product}, we expand $H(t_1,t_2)$ in terms of $s_1$ and $s_2$:
\begin{equation} \label{eqn:H.s.expression}
\begin{split}
    H(t_1,t_2) =& - \langle \mathbf{v}, \mathbf{w} \rangle s_1s_2 + \frac{\alpha \langle \mathbf{v}, \mathbf{w}\rangle}{1 + \alpha (\xi_q \cdot \eta_q)} \cdot \left(( \eta_q \cdot v) s_1^2s_2 + ( \xi_q \cdot w) s_1s_2^2 \right) \\
    &+ (C_3 - \alpha C_1C_2) \alpha^2 (C_1^2 +C_2^2 + C_1C_2) -\frac{\alpha}{2} (C_3 - \alpha C_1C_2)^2 \\
    &+ \mathrm{higher}\text{ }\mathrm{order}\text{ }\mathrm{terms},
\end{split}
\end{equation}
where $C_1 = \frac{\eta_q \cdot v}{1 + \alpha (\xi_q \cdot \eta_q)} s_1$, $C_2 = \frac{ \xi_q \cdot w}{1 + \alpha (\xi_q \cdot \eta_q)} s_2$, and $C_3 = \frac{ v \cdot w}{1 + \alpha (\xi_q \cdot \eta_q)} s_1 s_2$.

On the other hand, the geodesic equations (see \cite[(86)]{W18}) give us, after some simplifications, Taylor expansions of $s_1$ and $s_2$:
\begin{equation} \label{eqn.s.t.expression.1}
    s_1(t_1) = t_1 + \alpha (D\varphi(q) \cdot v) t_1^2 + T_1  t_1^3 + O(t_1^4),
\end{equation}
\begin{equation} \label{eqn.s.t.expression.2}
    s_2(t_2) = t_2 + \alpha (D\psi(q) \cdot w) t_2^2 + T_2  t_2^3 + O(t_2^4),
\end{equation}
where $T_1 = \frac{1}{3}(4(\alpha (D\varphi(q) \cdot v))^2 + \alpha (v^{\top} D^2\varphi(q)v))$ and $T_2 = \frac{1}{3}(4(\alpha (D\psi(q) \cdot w))^2 + \alpha (w^{\top} D^2\psi(q)w))$. The proof is completed by combining \eqref{eqn:H.s.expression}, \eqref{eqn.s.t.expression.1} and \eqref{eqn.s.t.expression.2}. \qed
\end{proof}

This result gives a geometric interpretation of the negative sectional curvature $-\alpha$ in terms of the canonical local $L^{(\alpha)}$-divergence $\mathbf{D}$. Note that if we use another intrinsic divergence (such as the conformal divergence) we will get a different expression in \eqref{eqn:H.expansion}. Analogous results can be derived for the $L^{(-\alpha)}$-divergence.

Note that Theorem \ref{thm:curvature} implies that $\left. \frac{\partial^2}{\partial t_1^2 \partial t_2^2} {\bf D}[r(t_1) : p(t_2)] \right|_{t_1 = t_2 = 0} = -2 \alpha \langle {\bf v}, {\bf w} \rangle^2$, so the sectional curvature $-\alpha$ may be interpreted in terms of this fourth order mixed derivative. In \cite[Theorem 3.13]{WY19} we extended this result to any divergence. This is formulated using a novel connection between the information geometry of $c$-divergence (which covers all divergences) and the pseudo-Riemannian framework of Kim and McCann \cite{KM10}. In particular, for any divergence ${\bf D}$, the mixed derivative $\left. \frac{\partial^2}{\partial t_1^2 \partial t_2^2} {\bf D}[r(t_1) : p(t_2)] \right|_{t_1 = t_2 = 0}$ is equal to $-2$ times an un-normalized cross curvature of the Kim-McCann metric induced by the cost function. The reader is referred to \cite{WY19} for more details. To conclude this paper, let us remark that for a statistical manifold with non-constant sectional curvature, this cross sectional curvature is not intrinsic as there are different divergences (and hence Kim-McCann metrics) which induce the same dualistic structure. A natural starting point is to analyze the canonical divergence of Ay and Amari constructed in \cite{AA15}. We leave this as a problem for future research.

\bibliographystyle{plain}
\bibliography{geometry.ref}

\end{document}